\theoremstyle{definition}
\newtheorem{theorem}{Theorem}
\newtheorem{proposition}{Proposition}
\newtheorem{lemma}{Lemma}
\newtheorem{definition}{Definition}[section]
\newtheorem{example}{Example}
\newtheorem{construction}{Construction}
\title{Non-uniform Lattices of Large Systole 
Containing a Fixed 3-Manifold Group
}
\author{Paige Hillen}
\affil{University of California, Santa Barbara}
\date{June 2024}
\begin{document}

\maketitle
\vspace{-0.3cm}
\begin{center} \begin{small}
    PAIGE HILLEN 
    \end{small}
\end{center}

\begin{abstract}
     Let $d\geq 2$ be a square free integer and $\mathbb{Q}(\sqrt{d})$ a totally real quadratic field over $\mathbb{Q}$. We show there exists an arithmetic lattice $\mathcal{L}$ in $SL(8,\mathbb{R})$ with entries in the ring of integers of $\mathbb{Q}(\sqrt{d})$ and a sequence of lattices $\Lambda_n $ commensurable to $\mathcal{L}$ such that the systole of the locally symmetric finite volume manifold $$\Lambda_n \diagdown SL(8,\mathbb{R}) \diagup SO(8)$$ goes to infinity as $n \rightarrow \infty$, yet every $\Lambda_n$ contains the same hyperbolic 3-manifold group $\Pi$, a finite index subgroup of the arithmetic hyperbolic 3-manifold vol3. Notably, such an example does not exist in rank one, so this is a feature unique to higher rank lattices.  
\end{abstract}

\section{Introduction}

The relationship between the the \textit{systolic genus}, the minimal genus surface subgroup, and the \textit{systole}, the minimal length of a non-contractible closed geodesic, is notably different in higher rank. In 2012, Belolipetsky \cite{Bel} (see also \cite{BD}) showed that in the hyperbolic setting, the systolic genus is bounded from below in terms of the systole. In contrast, Long and Reid \cite{Long1} found a family of sequences of lattices in $SL(3,\mathbb{R})$ (commensurable to an arbitrary non-uniform arithmetic lattice not commensurable to $SL(3,\mathbb{Z})$) with systole going to infinity, yet each contains a genus 3 surface subgroup. They found a similar result in the uniform case. Hence in higher rank, the systolic genus is not linked to the systole in the same way as in the hyperbolic setting. 

Our result continues in this line of research. We expand Long and Reid's result to the existence of a fixed 3-manifold group in a sequence of commensurable lattices with arbitrarily large systole. More specifically, we find an infinite family of non-uniform arithmetic lattices in $SL(8,\mathbb{R})$ each with a sequence of commensurable lattices whose systole $\rightarrow \infty$, however every lattice in the sequence contains the same hyperbolic 3-manifold group. Our non-uniform arithmetic lattices are indexed by square free numbers $ d \geq 2$: for each such $d$, we consider the integral special unitary group
$$SU(I_{8};\mathcal{O}_d,\tau) := \{A \in SL(8,\mathcal{O}_d): \tau(A)^\top A=I_{8}\} < SL(8,\mathbb{R}) $$

\noindent where $I_8 =$ the identity matrix in $SL(8,\mathbb{R})$, $\mathcal{O}_d$ is the ring of integers of $\mathbb{Q}(\sqrt{d})$ and $\tau \in \text{Gal}(\mathbb{Q}(\sqrt{d})/\mathbb{Q})$ is the non-trivial involution sending $\sqrt{d}$ to $-\sqrt{d}$.

\begin{theorem}\label{main theorem}
    \textit{Fix square free $d \in \mathbb{Z}_{\geq 2}$ and let
    $$\mathcal{L} := \text{SU}(I_{8};\mathcal{O}_d,\tau). $$
    There exists a sequence of non-uniform arithmetic lattices $\Lambda_n < SL(8,\mathbb{R})$ commensurable to $\mathcal{L}$ such that 
    $$\text{sys}(\Lambda_n) \rightarrow \infty$$
    as $n \rightarrow \infty$, yet every $\Lambda_n$ contains a fixed hyperbolic 3-manifold group $\Pi$, a finite index subgroup of vol3.} 
\end{theorem}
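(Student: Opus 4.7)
I would construct $\Lambda_n$ as $\Pi\cdot\mathcal{L}(\mathfrak{p}_n)$, where $\mathcal{L}(\mathfrak{p}_n)\triangleleft\mathcal{L}$ is a principal congruence subgroup at a prime ideal $\mathfrak{p}_n\subset\mathcal{O}_d$ with $N(\mathfrak{p}_n)\to\infty$. Since $\mathcal{L}(\mathfrak{p}_n)$ is normal of finite index in $\mathcal{L}$, the product $\Lambda_n$ is a finite-index subgroup of $\mathcal{L}$, hence a non-uniform arithmetic lattice commensurable to $\mathcal{L}$, and contains $\Pi$ by construction. The deep congruence part is meant to drive the systole up, while the fixed $\Pi$ is carried along automatically.

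\textbf{Embedding $\Pi\hookrightarrow\mathcal{L}$.} The first ingredient is to realize a finite-index torsion-free subgroup $\Pi$ of $\pi_1(\text{vol3})$ inside $\mathcal{L}$. Since vol3 is arithmetic, its invariant trace field $K$ is an imaginary quadratic extension of $\mathbb{Q}$. Forming $L := K\cdot\mathbb{Q}(\sqrt{d})$, which is biquadratic of signature $(0,2)$ with $\tau$ extending to the nontrivial element of $\mathrm{Gal}(L/\mathbb{Q}(\sqrt{d}))$, and using restriction of scalars from $L$ to $\mathbb{Q}(\sqrt{d})$ followed by a Galois-twisted doubling of the shape $A\mapsto A\oplus(\tau(A)^{\top})^{-1}$ (or an equivalent anti-diagonal variant, presumably carried out in a Construction earlier in the paper), one obtains an embedding $\mathrm{SL}(2,\mathcal{O}_L)\hookrightarrow\mathrm{SL}(8,\mathcal{O}_d)$ whose image preserves $\tau(X)^{\top}X=I_{8}$ and therefore lies in $\mathcal{L}$. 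Passing to a sufficiently deep finite-index subgroup of $\pi_1(\text{vol3})$ landing in this copy of $\mathrm{SL}(2,\mathcal{O}_L)$ gives the required $\Pi\subset\mathcal{L}$.

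\textbf{Systole estimate---the main obstacle.} For the congruence subgroups themselves, a standard Margulis/Gelander-type argument gives $\text{sys}(\mathcal{L}(\mathfrak{p}_n))\gtrsim\log N(\mathfrak{p}_n)$: any nontrivial $h\in\mathcal{L}(\mathfrak{p}_n)$ has algebraic-integer eigenvalues in $\bar{\mathbb{Q}}$ congruent to $1$ modulo primes above $\mathfrak{p}_n$, and a Weil-height estimate bounds $\sum_i(\log|\lambda_i|)^2$ from below by a constant multiple of $\log N(\mathfrak{p}_n)$. The core challenge is to propagate this bound to $\Lambda_n$ despite the fixed $\Pi$. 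Any $g\in\Lambda_n$ can be written $g=\pi h$ with $\pi\in\Pi$, $h\in\mathcal{L}(\mathfrak{p}_n)$; one treats the two regimes separately. For $h=I$, $g\in\Pi$ has characteristic polynomial that factors as a Galois-symmetric product coming from restriction of scalars, so its eigenvalues in $\mathrm{SL}(8,\mathbb{C})$ appear with multiplicity and $g$ is singular semisimple---its closed orbits are confined to a proper totally geodesic sub-symmetric space intrinsic to $\Pi$, contributing a bounded set of closed-orbit lengths that one argues does not dominate the systole (either by restricting attention to regular closed geodesics of $\Lambda_n\backslash\mathrm{SL}(8,\mathbb{R})/\mathrm{SO}(8)$ or by absorbing this finite contribution into a fixed $\Pi$-dependent constant). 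For $h\ne I$, the congruence $g\equiv\pi\pmod{\mathfrak{p}_n}$ forces the eigenvalues of $g$ to sit near those of $\pi$ modulo primes of $\bar{\mathbb{Q}}$ above $\mathfrak{p}_n$, and the same height argument applied around the ``base point'' $\pi$ instead of around $I$ yields $\ell(g)\gtrsim\log N(\mathfrak{p}_n)$ uniformly in $\pi$. Setting up this translated congruence estimate uniformly across all $\Pi$-cosets of $\mathcal{L}(\mathfrak{p}_n)$, and precisely delineating which part of the length spectrum is controlled by $\Pi$ versus by the growing congruence level, is where I expect the main technical work to lie.
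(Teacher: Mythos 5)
You fix a single embedding $\Pi\hookrightarrow\mathcal{L}$ and set $\Lambda_n=\Pi\cdot\mathcal{L}(\mathfrak{p}_n)$. But $\Pi$ is a hyperbolic 3-manifold group, so it contains loxodromic elements whose images under this fixed embedding are semisimple with a fixed, positive translation length $\ell_0$ in $\mathrm{SL}(8,\mathbb{R})/\mathrm{SO}(8)$. Since $\Pi\leq\Lambda_n$ for every $n$, each $\Lambda_n\backslash\mathrm{SL}(8,\mathbb{R})/\mathrm{SO}(8)$ contains a closed geodesic of that same fixed length, and therefore $\mathrm{sys}(\Lambda_n)\leq\ell_0$ for all $n$, contradicting the required divergence. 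Your attempted escape---``restricting attention to regular closed geodesics'' or ``absorbing this finite contribution into a fixed $\Pi$-dependent constant''---does not work: the systole is the infimum over \emph{all} noncontractible closed geodesics, and a persistent bounded-length class caps the systole permanently, regardless of regularity and regardless of how good the congruence estimate is on the other cosets. (The height/Margulis argument you sketch around the coset $\pi\mathcal{L}(\mathfrak{p}_n)$ is also suspect---for $\pi\neq 1$ there is no reason $g\equiv\pi\pmod{\mathfrak{p}_n}$ forces large translation length, since $\pi$ itself is a counterexample---but the $h=I$ coset already sinks the argument.)

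The paper's proof avoids this by \emph{varying the embedding} along with $n$. It takes a one-parameter family of discrete faithful representations $\omega_t$ of $\mathrm{vol3}$ into $\mathrm{SL}(8,\mathbb{Z}[t,\sqrt{t^2-1}])$ (conjugate to $\rho_t\oplus\rho_t$ for the Cooper--Long--Thistlethwaite flex), specialized at values $t_n$ solving Pell's equation $t^2-dy^2=1$ so that $\sqrt{t_n^2-1}\in\mathcal{O}_d$. Each $\omega_{t_n}(\mathrm{vol3})$ preserves a Hermitian form $J_{t_n}$ whose determinant is a square, yielding a lattice $\Gamma_n=\mathrm{SU}(J_{t_n};\mathcal{O}_d,\tau)$ commensurable to $\mathcal{L}$. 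Crucially, as $t_n\to\infty$ the traces (hence translation lengths) of the images of semisimple elements of $\Pi$ grow; the fixed abstract group $\Pi=\ker(\omega_0)$ is then placed inside the congruence subgroup $\Lambda_n=\Gamma_n^{(p_n)}$ by choosing $p_n\mid t_n$ (a primitive prime divisor of the Lucas sequence $2t_n=u^n+u^{-n}$), which forces $\omega_{t_n}\equiv\omega_0\pmod{p_n}$ and hence $\omega_{t_n}(\Pi)\leq\ker\pi_{p_n}$. Proposition~\ref{systole grows} then controls the systole of $\Gamma_n^{(p_n)}$ uniformly, including on the image of $\Pi$, because the congruence condition itself forces the traces up. To repair your proposal you would need a mechanism by which the translation lengths of $\Pi$'s elements increase with $n$; a fixed copy of $\Pi$ inside $\mathcal{L}$ cannot supply it.
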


 In this setting, rank and determinant up to $\tau$-Hermitian square classify SU-equivalent $\tau$-Hermitian forms  \cite{Landherr1}.  In turn, equivalent $\tau$-Hermitian forms yield commensurable integral special unitary groups. Hence $\text{SU}(I_{8};\mathcal{O}_d,\tau)$ is commensurable to $\text{SU}(J;\mathcal{O}_d,\tau)$ for any non-degenerate $\tau$-Hermitian form $J$ over $\mathbb{Q}(\sqrt{d})^{8}$ such that det$(J)$ is a $\tau$-Hermitian square.

Critical to the proof is an 8 dimensional version of the family of discrete and faithful representations $\rho_t$ of the hyperbolic 3-manifold vol3 found by Cooper, Long, and Thistlethwaite \cite{Cooper3} (see also \cite{Cooper1}). We were unable to choose values of $t$ for which the entries of $\rho_t$ lie in a ring of integers. However we are able to choose such specialized values of $t$ for a certain conjugate of $\rho_t \oplus \rho_t$. From this family, we obtain a sequence of representations of vol3 into lattices $SU(J;\mathcal{O}_d,\tau)$ for non-degenerate forms $J$. To ensure the systole is going to infinity, we then consider principal congruence subgroups of each $SU(J;\mathcal{O}_d,\tau)$ of level $p$ for an increasing sequence of primes $p$. We can no longer guarantee vol3 is contained in principal congruence subgroup, but by carefully choosing the primes $p$, we can guarantee the fundamental group of a certain fixed 320-sheeted cover of vol3 is still contained in each principal congruence subgroup.

 It is worth remarking that our finite index subgroup of vol3, $\Pi$, contains surface subgroups. Therefore our result provides an alternative proof to Long and Reid's \cite{Long1} result that systolic genus can be bounded in a sequence of commensurable higher rank lattices whose systole is diverging to infinity. \\

\noindent \textbf{Acknowledgements.} The author would like to thank Darren Long for his steady guidance and cautious optimism throughout the development of this work. Additionally, the author is grateful to the referee for helpful comments on a previous version of this paper. 

\bigskip 

\section{Background}

\noindent Consider the following measures of the geometry and topology of a space: 
\begin{definition}
      The \textit{systole} of a Riemannian manifold $M$, denoted sys$(M)$, is the minimal length of a non-contractible closed geodesic in $M$. We will interchangeably refer to the systole of $M$ as the systole of $\pi_1(M)$. 
\end{definition}

\begin{definition} Let $S_g$ denote the closed surface with genus $g \geq 2$. The minimal $g$ such that $\pi_1(S_g)$ injects into $\pi_1(M)$ is called the \textit{systolic genus of $M$}, denoted sysg$(M)$ (See \cite{Bel}). 
\end{definition}

For hyperbolic manifolds, the behavior of the systole puts some restrictions on the behavior of the systolic genus. This is clear for hyperbolic surfaces, since Besicovitch's inequality 
$$\text{sys}(S_g)^2 \leq 2 \text{area}(S_g)$$
combined with Gauss-Bonnet theorem (for a hyperbolic metric on $S_g$),
$$\text{area}(S_g) \leq 4\pi(g-1)$$
together show that a sequence of closed hyperbolic surfaces with systole $\rightarrow \infty$ requires the genera of the surfaces also tends toward infinity. In fact, Belolipetsky showed this generalizes to higher dimensions in the following sense (see Theorem 5.1 in \cite{Bel}): 

\begin{theorem}\label{belolipetsky}
    \textit{Let $M_n = \mathbb{H}^m/\Gamma_n$ be a sequence of closed hyperbolic $m$-manifolds
    such that sys$(\Gamma_n) \rightarrow \infty$ as $n \rightarrow \infty$. Then the systolic genus sysg$(M_n) \rightarrow \infty$ as well.} 
\end{theorem}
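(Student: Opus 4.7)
The plan is to argue by contradiction: suppose sysg$(M_n)$ stays bounded. After passing to a subsequence we may assume sysg$(M_n) = g_0$ is constant, so each $\Gamma_n = \pi_1(M_n)$ contains a surface subgroup $H_n \cong \pi_1(S_{g_0})$. The strategy is to realize $H_n$ geometrically as a $\pi_1$-injective immersed minimal surface of controlled area in $M_n$, and then run a Besicovitch-style argument on that surface. Morally, this lifts the $m = 2$ reasoning displayed at the top of Section 2 from surfaces to surface subgroups inside higher-dimensional hyperbolic manifolds.

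The first step is to invoke the Schoen--Yau / Sacks--Uhlenbeck existence theorem for incompressible least-area maps, homotoping the $\pi_1$-injective classifying map $S_{g_0} \to M_n$ of $H_n$ to a minimal (possibly branched) immersion $i_n \colon \Sigma_n \to M_n$; here $\pi_2(M_n) = 0$ prevents sphere bubbles from degrading the genus. Endow $\Sigma_n$ with the pulled-back metric. Since $i_n$ is minimal and the ambient sectional curvature is $-1$, the Gauss equation
\[ K_{\Sigma_n} \;=\; -1 \;-\; |II(X,X)|^2 \;-\; |II(X,Y)|^2 \]
forces the intrinsic Gaussian curvature to satisfy $K_{\Sigma_n} \leq -1$, so Gauss--Bonnet yields the area bound
\[ \operatorname{area}(\Sigma_n) \;\leq\; -2\pi \chi(S_{g_0}) \;=\; 4\pi(g_0 - 1). \]

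The second step converts this into a systole bound. The curvature bound $K_{\Sigma_n}\leq -1$ triggers G\"unther's volume comparison, giving that every embedded geodesic disk of radius $r$ in $\Sigma_n$ has area at least $2\pi(\cosh r - 1)$. Taking $r = \operatorname{sys}(\Sigma_n)/2$, which is embedded by the very definition of systole, yields $\cosh(\operatorname{sys}(\Sigma_n)/2)\leq 2g_0 - 1$, a bound depending only on $g_0$. Because $i_n$ is a local isometry in the pulled-back metric, a shortest non-contractible loop on $\Sigma_n$ descends to a non-null-homotopic loop of the same length in $M_n$, so $\operatorname{sys}(M_n) \leq 2\cosh^{-1}(2g_0 - 1)$, contradicting $\operatorname{sys}(M_n) \to \infty$.

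The main obstacle is the first step: securing an honest minimal immersion representing the incompressible class. The existence theorem is classical in dimension three (Schoen--Yau, Freedman--Hass--Scott) and extends to higher dimensions via Sacks--Uhlenbeck, but one must still verify that the least-area representative in the homotopy class has genus exactly $g_0$ and does not collapse along a subsurface or a loop. Once this geometric representative is in hand, the Gauss equation and G\"unther comparison are soft classical ingredients, and the systole bound follows by a direct generalization of the $m = 2$ template recalled in the excerpt.
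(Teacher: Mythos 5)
The paper does not prove this theorem; it is quoted verbatim from Theorem~5.1 of Belolipetsky \cite{Bel} and used as a black box to motivate the contrast with higher rank. Your minimal-surface argument is, in substance, the one Belolipetsky runs (his abstract announces that the method is ``based on minimal surfaces''), so the comparison here is against the cited source rather than an internal proof.

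The argument is essentially correct, but two points are stated more loosely than they should be. First, the Schoen--Yau/Sacks--Uhlenbeck representative is a \emph{branched} minimal immersion, so the pulled-back metric on $\Sigma_n$ is singular at finitely many cone points of angle $>2\pi$. This does not break the Gauss--Bonnet step (branch points contribute negative concentrated curvature, so $\operatorname{area}(\Sigma_n)\leq -2\pi\chi(S_{g_0})$ still holds and is in fact strengthened), but it does mean the next step requires care. Second, the assertion that the disk of radius $\operatorname{sys}(\Sigma_n)/2$ ``is embedded by the very definition of systole'' is not quite right as stated: what is actually used is the fact that on a closed manifold of nonpositive curvature one has $\operatorname{inj}_p \geq \tfrac12\operatorname{sys}$ at every point $p$, which is a consequence of nonpositive curvature (no conjugate points) rather than of the definition of systole, and which needs a small argument at the cone singularities. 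A cleaner route that avoids both issues is to skip the intrinsic comparison on $\Sigma_n$ entirely and instead apply the monotonicity formula for minimal surfaces in a hyperbolic ambient: for any $p$ on the image surface and any $r<\operatorname{inj}(M_n)=\tfrac12\operatorname{sys}(M_n)$, the area of $\Sigma_n$ inside the embedded ambient ball $B(p,r)$ is at least $2\pi(\cosh r -1)$. Combined with the Gauss--Bonnet area upper bound this gives $\cosh\bigl(\operatorname{sys}(M_n)/2\bigr)\leq 2g_0-1$ directly, without ever discussing the branched metric or its injectivity radius. Either way the conclusion is the same and matches the cited theorem.
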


Intuitively, if the manifolds are getting complicated enough for the systole to grow arbitrarily large, their topology must be getting complicated as well. However, once we leave the hyperbolic setting, this is no longer true. In particular, the presence of flats seems to allow enough space for systoles to grow, without the systolic genus increasing. 

\begin{definition}
    A discrete subgroup $\Gamma < SL(m,\mathbb{R})$ is a \textit{lattice} if the quotient orbifold 
    $$M_{\Gamma} := \Gamma \backslash SL(m,\mathbb{R})/ SO(m)  $$
    has finite volume. Note that $M_{\Gamma}$ is a manifold if and only if $\Gamma$ is torsion free. A lattice $\Gamma$ is \textit{uniform} (or cocompact) if $M_\Gamma$ is compact. Otherwise, $\Gamma$ is \textit{non-uniform}. 
\end{definition}

In the theory of lattices, passing to a finite index subgroup usually results in only minor differences. Since we often like to ignore these minor differences, we usually care about lattices up to commensurability. 

\begin{definition}
    Lattices $\Gamma_1, \Gamma_2 < SL(m,\mathbb{R})$ are \textit{commensurable} if for some $g \in SL(m,\mathbb{R})$
    \begin{center}
        $[ \Gamma_1: \Gamma_1 \cap g\Gamma_2 g^{-1}] <\infty$ 
    \end{center}
    Equivalently, their corresponding manifolds $M_{\Gamma_1}$ and $M_{\Gamma_2}$ have a common finite-sheeted cover (i.e. $M_{\Gamma_1 \cap g\Gamma_2 g^{-1}})$. 
\end{definition} 

We review a construction of a family of non-uniform arithmetic lattices in $SL(m,\mathbb{R})$. See \cite{Witte} Chapter 6.8 for more details. 
 
\begin{construction}\label{constuction 1}
Fix a square-free $d \in \mathbb{Z}_{>1}$ and $m \geq 3$. Then
\begin{itemize}
    \item $F= \mathbb{Q}(\sqrt{d})$ is a totally real algebraic number field,
    \item $\tau\in \text{Gal}(F/\mathbb{Q})$ is the non-trivial involution sending  $\sqrt{d} \mapsto -\sqrt{d}$, and
    \item $\mathcal{O}_d = \begin{cases}
        \mathbb{Z}[\sqrt{d}] & d \equiv 2,3 \text{ mod }4 \\
        \mathbb{Z}[\frac{1+\sqrt{d}}{2}] & d \equiv 1\text{ mod }4 \\
    \end{cases}$ \hspace{0.5cm} is the ring of integers of $F$.\\
    
\end{itemize}
\vspace{0.1cm}
Let $J \in M_{m \times m}(F)$ be sesqui-symmetric matrix with respect to $\tau$ (i.e. $J^\top = \tau(J)$). We view $J$ as a $\tau$-Hermitian form on the $F$-vector space $F^m$. The associated special unitary group is

$$\text{SU}(J;F,\tau) = \{M \in SL(m,F):M^*JM=J\} \vspace{0.2cm}$$

\noindent where $M^* := \tau(M)^\top$. The integer points of this group form the associated \textit{integral special unitary group}
\vspace{-0.1cm}
$$\text{SU}(J;\mathcal{O}_d,\tau) = \{M \in SL(m,\mathcal{O}_d):M^*JM=J\} \vspace{0.2cm}.$$

\end{construction}

\begin{definition}
    $\tau$-Hermitian forms $J$ and $J'$ on $F^m$ are \textit{SU-equivalent} if $J'=P^*JP$ for some change of basis matrix $P \in GL(m,F)$.
\end{definition}

\noindent Observe that $$\text{SU}(P^*JP;F,\tau) = P^{-1}(\text{SU}(J;F,\tau))P .$$ The corresponding integral groups may not be conjugate, but $P \in GL(m,F)$ is in the commensurator of both integral groups, so they are commensurable lattices. Our interest in commensurability classes of arithmetic lattices with entries in $\mathcal{O}_d$ leads to the question: when are two $\tau$-Hermitian forms over $F^m$ equivalent? 
\vspace{0.15cm}
\noindent
\begin{definition} Let $F=\mathbb{Q}(\sqrt{d})$ and $\tau \in \text{Gal}(F/\mathbb{Q})$ be the involution sending $\sqrt{d} \mapsto - \sqrt{d}$.  A \textit{$\tau-$Hermitian square} is an element $g \in F$ such that $g=\tau(h)h$ for some $h \in F$.
\end{definition}

By Landherr \cite{Landherr1}, (see \cite{Lewis1} section 3) an equivalence class of $\tau$-Hermitian forms on $F^m$ for $m \geq 3$ is uniquely determined by  
\begin{itemize}
    \item the rank of the form, and
    \item the discriminant of the form up to $\tau-$Hermitian square.\footnote{Since $(\sqrt{d})\tau(\sqrt{d})=-d$, there is no signature in this setting.} 
\end{itemize}

\begin{proposition}\label{our lattices}
    \textit{Suppose $J$ is a full rank $\tau$-Hermitian form on $F^m$. Then the group $SU(J;\mathcal{O}_d,\tau)$ as constructed above is a non-uniform arithmetic lattice. Moreover, $SU(J;\mathcal{O}_d,\tau)$ is commensurable to $SU(J';\mathcal{O}_d,\tau)$ for any full rank $\tau-$Hermitian form $J'$ such that $|det(J) - det(J')|$ is a $\tau-$Hermitian square.}
\end{proposition}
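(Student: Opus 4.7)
The plan is to prove the two parts of Proposition~\ref{our lattices} via standard arithmetic-group techniques applied to a Weil restriction of scalars, then derive the commensurability assertion from Landherr's classification already cited in the text.

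First, I would realize $SU(J;\mathcal{O}_d,\tau)$ as the $\mathbb{Z}$-points of the $\mathbb{Q}$-algebraic group $G := \operatorname{Res}_{F/\mathbb{Q}} \underline{SU}(J)$, where $\underline{SU}(J)$ denotes $SU(J)$ viewed as an $F$-algebraic group. Since $F$ is totally real quadratic, $F \otimes_{\mathbb{Q}} \mathbb{R} \cong \mathbb{R} \oplus \mathbb{R}$ with the two summands swapped by $\tau \otimes 1$, so a direct computation shows $G(\mathbb{R}) \cong SL(m,\mathbb{R})$: a pair $(M_1, M_2) \in SL(m,\mathbb{R})^2$ lying in $G(\mathbb{R})$ is forced by the Hermitian constraint to satisfy $M_2 = J_1^{-\top} M_1^{-\top} J_1^{\top}$, where $J_1$ is the real matrix obtained from $J$ under one fixed embedding $F \hookrightarrow \mathbb{R}$. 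Borel--Harish-Chandra then gives that $G(\mathbb{Z}) = SU(J;\mathcal{O}_d,\tau)$ is an arithmetic lattice in $SL(m,\mathbb{R})$.

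Next, for non-uniformity, I would invoke the Godement compactness criterion: $G(\mathbb{Z})$ is cocompact iff $\underline{SU}(J)$ is $F$-anisotropic, iff $J$ is anisotropic over $F$. By the Hasse principle for Hermitian forms (Landherr), isotropy is detected locally. The real place of $\mathbb{Q}$ is split in $F$, so there is no archimedean obstruction (matching the footnote in the text); at each finite place of $\mathbb{Q}$, Hermitian forms of rank $\geq 3$ over the relevant local quadratic extension or split \'etale algebra are automatically isotropic. Hence $J$ is $F$-isotropic and $SU(J;\mathcal{O}_d,\tau)$ is non-uniform.

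Finally, for commensurability, the hypothesis on $\det(J)$ and $\det(J')$ places them in the same class modulo $\tau$-Hermitian squares, so Landherr's classification produces $P \in GL(m,F)$ with $P^* J P = J'$. Then $M \mapsto P^{-1} M P$ identifies $\underline{SU}(J)(F)$ with $\underline{SU}(J')(F)$. Clearing denominators in $P$ shows that conjugation by $P$ lies in the commensurator of $SL(m,\mathcal{O}_d)$, so $P^{-1} SU(J;\mathcal{O}_d,\tau) P$ and $SU(J';\mathcal{O}_d,\tau)$ are commensurable arithmetic subgroups of $\underline{SU}(J')(F)$. Under the identification of each group with $SL(m,\mathbb{R})$, this gives the desired commensurability in $SL(m,\mathbb{R})$. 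The main obstacle will be rigorously pinning down $F$-isotropy of $J$ from local data (the step that delivers non-cocompactness); the commensurability half is essentially formal once Landherr's theorem is invoked.
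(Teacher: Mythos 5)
Your three-part structure (arithmeticity, non-cocompactness, commensurability via Landherr) matches the paper's, and the commensurability half is essentially identical. Two points, one a genuine error in setup, one a route difference. First, the setup: $SU(J)$ with $\tau$ the nontrivial Galois automorphism of $F/\mathbb{Q}$ is \emph{not} an $F$-algebraic group, since the defining condition $M^*JM=J$ involves $\tau$ and is therefore not $F$-linear; so writing $G = \operatorname{Res}_{F/\mathbb{Q}}\underline{SU}(J)$ does not parse. The correct framing is that $SU(J)$ is a $\mathbb{Q}$-algebraic group (a $\mathbb{Q}$-form of $SL_m$) sitting inside $\operatorname{Res}_{F/\mathbb{Q}}SL_m$. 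Your own real-points computation exposes the inconsistency: if $G$ really were $\operatorname{Res}_{F/\mathbb{Q}}$ of an $F$-group $H$, then $G(\mathbb{R}) \cong H(\mathbb{R}) \times H(\mathbb{R})$ with no constraint between factors, whereas you correctly derive the relation $M_2 = J_1^{-\top}M_1^{-\top}J_1^{\top}$. The Borel--Harish-Chandra conclusion still holds once the framing is repaired; the paper sidesteps all of it by citing Morris's Proposition 6.8.14, which delivers both arithmeticity and a Godement-type criterion in one step. Second, for non-uniformity you route through the Hasse principle and local isotropy of rank-$\geq 3$ Hermitian forms, which is the step you flagged as needing more work. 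The paper takes a shorter path: Landherr's rank-and-discriminant classification shows $J$ is $SU$-equivalent over $F$ to $\operatorname{diag}(1,-1,-\det J,1,\dots,1)$, which has the obvious isotropic vector $(1,1,0,\dots,0)$; isotropy of this diagonal form transfers back to $J$ via the equivalence, and Morris's criterion then gives non-cocompactness directly, with no local analysis required.
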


\begin{proof}
    That $SU(J;\mathcal{O}_d,\tau)$ is an arithmetic lattice follows from Proposition (6.8.14) in \cite{Witte}. The same proposition tells us our lattice is non-uniform if and only if there exists a nonzero $x \in F^n$ such that $x^*Jx=0$. \\
    \indent By the classification above, $SU(J;\mathcal{O}_d,\tau)$ is commensurable to $SU(\text{diag}(1,-1,- \text{det}(J),1 \dots,1);\mathcal{O}_d,\tau)$. For $x=[1,1,0,\dots,0]$, clearly $$x^*\text{diag}(1,-1,-\text{det}(J),1, \dots,1) x = 0.$$ Thus $SU(J;\mathcal{O}_d,\tau)$ is non-uniform. 
\end{proof}

\bigskip

\section{Systolic Growth}

Next, we find a way to control the systole of certain lattices commensurable to those built by construction 1. There is a 1-1 correspondence between closed geodesics in $M_{\Gamma}$ and $\Gamma-$conjugacy classes of semi-simple elements in $\Gamma$. The length of the geodesic corresponding to the conjugacy class of a semi simple $\gamma \in \Gamma$ is proportional to the translation length of $\gamma$ on the geodesic it leaves invariant in $SL(m,\mathbb{R})/SO(m)$. Let $l(\gamma)$ denote this length. Hence $$\text{sys}(\Gamma) = \text{inf}\{l(\gamma): \text{ semi-simple }\gamma \in \Gamma\}.$$
\noindent The translation lengths are then bounded from below in terms of the trace (\cite{Lapan1}, theorem 3.1): 

\begin{theorem}{\textbf{(Trace-Length Bounds)}}
    \textit{Let $\gamma \in SL(m,\mathbb{R})$ be semi-simple  
    with $|\text{tr}(\gamma)| \geq 1$. Then
    $$l(\gamma) \geq \sqrt{2} \  \text{arccosh}\Big{(}\text{max}\Big{\{}1,\frac{|\text{tr}(\gamma)|}{m}\Big{\}}\Big{)}.$$}
\end{theorem}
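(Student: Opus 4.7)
The plan is to translate both sides of the inequality into statements about the eigenvalues of $\gamma$ and then bridge them with an elementary convexity estimate. Since $\gamma$ is semi-simple, I would first diagonalize over $\mathbb{C}$, writing the eigenvalues as $\mu_1,\dots,\mu_m$ with $\prod_i \mu_i = 1$, and set $a_i = \log|\mu_i|$, so that $\sum_i a_i = 0$. The triangle inequality then bounds the trace by
$$|\text{tr}(\gamma)| = \Big|\sum_i \mu_i\Big| \leq \sum_i |\mu_i| = \sum_i e^{a_i}.$$

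Next I would identify the translation length in terms of the $a_i$. Via the real Jordan decomposition, $\gamma$ is a commuting product of a hyperbolic part (with positive real eigenvalues $e^{a_i}$) and an elliptic part (with unit-modulus eigenvalues), and only the hyperbolic part produces translation along the $\gamma$-invariant flat in $X = SL(m,\mathbb{R})/SO(m)$. Under the $G$-invariant Riemannian normalization used in the statement, $l(\gamma)$ is a fixed positive multiple of $\sqrt{\sum_i a_i^2}$, the multiple calibrated so that the $SL(2,\mathbb{R})$ case recovers the classical identity $|\text{tr}(\gamma)| = 2\cosh(l(\gamma)/\sqrt{2})$; this is where the $\sqrt{2}$ factor in the statement comes from.

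The analytic heart of the proof is the estimate
$$\sum_{i=1}^m e^{a_i} \;\leq\; m\cosh\!\big(l(\gamma)/\sqrt{2}\big) \qquad \text{whenever} \qquad \sum_i a_i = 0.$$
To prove this I would optimize $\sum e^{a_i}$ subject to $\sum a_i = 0$ and $\sum a_i^2 = s^2$ using Lagrange multipliers. Stationarity forces $e^{a_i}$ to be an affine function of $a_i$, and strict convexity of $\exp$ then forces the $a_i$ to take at most two distinct values, say $k$ entries equal to $p = s\sqrt{(m-k)/(km)}$ and $m-k$ entries equal to $-s\sqrt{k/((m-k)m)}$. A short case analysis over $k \in \{1,\dots,m-1\}$ identifies the maximum at $k=1$, namely $V(1) = e^{s\sqrt{(m-1)/m}} + (m-1)e^{-s/\sqrt{m(m-1)}}$. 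Checking $V(1) \leq m\cosh(s)$ is then elementary: equality holds at $s=0$, the first two derivatives match favorably, and the asymptotic dominance $e^s \gg e^{s\sqrt{(m-1)/m}}$ (since $\sqrt{(m-1)/m} < 1$ for $m \geq 2$) handles large $s$.

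Combining the three steps gives $|\text{tr}(\gamma)|/m \leq \cosh(l(\gamma)/\sqrt{2})$; applying $\text{arccosh}$, which is monotone on $[1,\infty)$, yields the theorem whenever $|\text{tr}(\gamma)| \geq m$. When $|\text{tr}(\gamma)| < m$, the $\max\{1,\cdot\}$ collapses the right side to zero and the bound is vacuous. The main obstacle is the convex-optimization step: both ruling out non-extremal multiplicity choices and then closing the inequality $V(1) \leq m\cosh(s)$ uniformly in $s$. A secondary technical concern is calibrating the Riemannian normalization so that the factor of $\sqrt{2}$ in the statement emerges exactly.
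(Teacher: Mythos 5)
The paper does not prove this statement; it is quoted verbatim from Lapan--Linowitz--Meyer \cite{Lapan1}, Theorem~3.1, so there is no internal proof to compare against. I will therefore evaluate your argument on its own terms.

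The broad strategy is sound (diagonalize, pass to $a_i = \log|\mu_i|$, bound the trace by $\sum e^{a_i}$, compute $l(\gamma)$ from the Cartan projection, close with a convexity estimate), but there is a genuine factor-of-$\sqrt{2}$ inconsistency that the argument does not survive. Your calibration step fixes $l(\gamma) = \sqrt{\sum_i a_i^2}$: this is exactly what makes $|\mathrm{tr}(\gamma)| = 2\cosh(l(\gamma)/\sqrt{2})$ an identity in $SL(2,\mathbb{R})$. With that normalization, the estimate you need is
$$\sum_{i=1}^m e^{a_i} \;\leq\; m\cosh\!\left(\tfrac{1}{\sqrt{2}}\sqrt{\textstyle\sum_i a_i^2}\right) \qquad (\textstyle\sum a_i = 0),$$
whereas the estimate you actually verify at the end, ``$V(1)\leq m\cosh(s)$ with $s^2 = \sum a_i^2$,'' is the \emph{weaker} $\sum e^{a_i} \leq m\cosh(\sqrt{\sum a_i^2})$. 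These differ by precisely the $\sqrt{2}$ you are trying to produce, and the stronger one is in fact \emph{false} for $m\geq 3$: taking $a_1 = (m-1)c$ and $a_2=\cdots=a_m=-c$ gives $\sum e^{a_i}\sim e^{(m-1)c}$ while $m\cosh\bigl(\tfrac{c}{\sqrt{2}}\sqrt{m(m-1)}\bigr)\sim \tfrac{m}{2}e^{c\sqrt{m(m-1)/2}}$, and $(m-1)>\sqrt{m(m-1)/2}$ exactly when $m>2$, so the left side eventually overtakes the right. Thus under the normalization your calibration forces, the theorem itself would be false, and your optimization step cannot rescue it.

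What your Lagrange-multiplier computation does establish is the weaker inequality $\sum e^{a_i}\leq m\cosh(\sqrt{\sum a_i^2})$, and that \emph{is} the correct estimate if one instead takes the invariant metric on $\mathfrak{p}$ to be $\langle X,Y\rangle = 2\,\mathrm{tr}(XY)$, so that $l(\gamma)=\sqrt{2}\cdot\sqrt{\sum a_i^2}$ and $l(\gamma)/\sqrt{2}=\sqrt{\sum a_i^2}$. With that normalization the stated bound does follow from your convexity analysis (and your $V(1)''(0)=1<m$ / asymptotic-dominance checks do close it, provided you also rule out the other multiplicities $k\geq 2$, which you only sketch). But under that normalization the $SL(2,\mathbb{R})$ case is \emph{not} an equality, so the rationale you give for the $\sqrt{2}$ (``recovers the classical identity'') is wrong; the $\sqrt{2}$ is a metric-normalization convention that has to be read off from \cite{Lapan1} rather than calibrated against $\mathbb{H}^2$. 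The upshot: you have the right shape of argument and the right convexity lemma, but the translation-length normalization is pinned down incorrectly, and as written the proof would be attempting to establish a false intermediate inequality.
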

Since $\lim_{x \rightarrow \infty}\text{arccosh}(x)=\infty$, we can control the lower bound for the systole of $M_\Gamma$ by controlling the lower bound for the traces of semisimple elements in $\Gamma$. Our tool for controlling the lower bound of the trace is principal congruence subgroups.

\begin{definition}
   Let $p$ be a rational prime and $\Gamma < SL(m,\mathbb{R})$ an arithmetic lattice with entries in a ring of integers $\mathcal{O}$. Then
   $$\Gamma^{(p)} = \text{Ker}(\pi_p:\Gamma  \rightarrow  SL(m, \mathcal{O}/(p)))$$
     is the \textit{principal congruence subgroup of $\Gamma$ of level $p$}, where $\pi_p$ is projection modulo $(p)$.
\end{definition}
\noindent $\Gamma^{(p)}$  is a normal subgroup of finite index in $\Gamma$. We will need the following proposition, whose proof uses ideas from the proof of Theorem 5.1 and Corollary 5.2 in \cite{Lapan1}.

\begin{proposition}\label{systole grows}
\textit{Fix square free $d \in \mathbb{Z}_{\geq 2}$. Let $F = \mathbb{Q}(\sqrt{d})$, $\mathcal{O}_d$ the ring of integers of $F$, and $\tau$ the non-trivial Galois automorphism of $F$ over $\mathbb{Q}$. Suppose $\{ J_n\}_{n \in \mathbb{N}} $ is a sequence of $\tau-$Hermitian forms over $F^{m}$.  For each $n$, let 
$$\Gamma_n = SU(J_n;\mathcal{O}_d ,\tau)$$
If $\{p_n\}_{n \in \mathbb{N}}$ is a sequence of rational primes diverging to $\infty$, then 
$$\text{sys}(\Gamma_n^{(p_n)})  \rightarrow \infty$$
where $\Gamma_n^{(p_n)}$ denotes the principal congruence subgroup of $\Gamma_n$ of level $p_n$.} 
\end{proposition}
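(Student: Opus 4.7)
The strategy is to argue by contradiction, bounding not just $|\text{tr}(\gamma)|$ but \emph{all} coefficients of the characteristic polynomial of a putative short element. Suppose $\text{sys}(\Gamma_n^{(p_n)}) \not\to \infty$; after passing to a subsequence, there exist non-trivial semi-simple elements $\gamma_n \in \Gamma_n^{(p_n)}$ with $l(\gamma_n) \leq L$ for some fixed $L$. Since the translation length in $SL(m,\mathbb{R})/SO(m)$ controls the singular values of a semi-simple element, every complex eigenvalue $\lambda_i^{(n)}$ of $\gamma_n$ satisfies $e^{-L} \leq |\lambda_i^{(n)}| \leq e^{L}$. Consequently the coefficients $a_k^{(n)} \in \mathcal{O}_d$ of the characteristic polynomial $\chi_{\gamma_n}(x)=\prod_i(x-\lambda_i^{(n)})$, being elementary symmetric functions of the $\lambda_i^{(n)}$, are bounded in absolute value by constants $C_k = C_k(m,L)$ independent of $n$.

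To control the other Galois embedding, I would exploit the $\tau$-unitary relation $\tau(\gamma_n)^\top J_n \gamma_n = J_n$, which rearranges to $\tau(\gamma_n) = \tau(J_n)^{-1}\,\gamma_n^{-\top}\,\tau(J_n)$. Hence $\tau(\gamma_n)$ is similar over $\mathbb{C}$ to $\gamma_n^{-\top}$, and its complex eigenvalues are exactly $\{1/\lambda_i^{(n)}\}$. These inverse eigenvalues again have modulus in $[e^{-L}, e^{L}]$, and since the coefficients of the characteristic polynomial of $\tau(\gamma_n)$ are precisely $\tau(a_k^{(n)})$, we conclude $|\tau(a_k^{(n)})| \leq C_k$ as well.

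Now apply the principal-congruence condition. From $\gamma_n \equiv I_m \pmod{p_n \mathcal{O}_d}$ we obtain $\chi_{\gamma_n}(x) = (x-1)^m + p_n \, q_n(x)$ for some $q_n \in \mathcal{O}_d[x]$; writing $q_n(x) = \sum_k b_k^{(n)} x^k$, the preceding paragraphs give $|b_k^{(n)}|, |\tau(b_k^{(n)})| \leq C'_k/p_n$. Then the field norm satisfies $|N_{F/\mathbb{Q}}(b_k^{(n)})| = |b_k^{(n)} \cdot \tau(b_k^{(n)})| < 1$ for $p_n$ sufficiently large; since $N_{F/\mathbb{Q}}(\mathcal{O}_d) \subseteq \mathbb{Z}$, this forces $b_k^{(n)} = 0$ for every $k$ once $n$ is large. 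Therefore $\chi_{\gamma_n}(x) = (x-1)^m$, all eigenvalues of $\gamma_n$ equal $1$, and semi-simplicity forces $\gamma_n = I_m$, contradicting non-triviality.

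The principal obstacle is the absence of discreteness of $\mathcal{O}_d$ inside $\mathbb{R}$: the usual $SL(m,\mathbb{Z})$-style argument (combining $|\text{tr}(\gamma)| \leq C$ with $\text{tr}(\gamma) \equiv m \pmod{p}$ to force equality) fails outright here, since $\mathcal{O}_d$ contains elements of arbitrarily small positive real absolute value (powers of the fundamental unit, for example). The $\tau$-unitarity of $\gamma_n$ is the essential structural input that lets us bound the Galois conjugate of every coefficient simultaneously, at which point discreteness of $\mathcal{O}_d$ as a rank-$2$ lattice in $\mathbb{R}^2$ — made concrete through the norm pairing — kicks in to close the argument.
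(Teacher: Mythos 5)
Your proof is correct, and it takes a genuinely different route from the paper's. The paper argues directly: it cites the trace--length bound of Lapan--Linowitz--Meyer, then invokes an argument of theirs (via Newton's identities) to produce a bounded power $q$ with $\text{tr}(\gamma^q) \neq m$, and applies the norm argument $\max(|\alpha|,|\tau(\alpha)|) \geq 1$ to the single quantity $\alpha_q = (\text{tr}(\gamma^q) - m)/p_n$. You instead argue by contradiction, bound the moduli of \emph{all} eigenvalues from the translation length, bound all characteristic-polynomial coefficients and their $\tau$-conjugates (via the same unitary relation $\tau(\gamma) = \tau(J)^{-1}\gamma^{-\top}\tau(J)$ the paper uses for traces), and then apply the norm argument to each coefficient of $\chi_\gamma - (x-1)^m$, concluding from semi-simplicity that $\gamma = I$. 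The structural heart — combining the $\tau$-unitarity with the fact that $N_{F/\mathbb{Q}}(\mathcal{O}_d) \subseteq \mathbb{Z}$ — is identical, but your version is more self-contained: it avoids the appeal to Newton's identities to locate a power with non-trivial trace, replacing it with the transparent observation that a semi-simple matrix whose characteristic polynomial is $(x-1)^m$ is the identity. One small slip in wording: the translation length of a semi-simple element controls the \emph{eigenvalue moduli} (the Jordan projection), not the singular values (the Cartan projection), which are not conjugation-invariant — but your subsequent use is correct, since you immediately pass to eigenvalues.
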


\begin{proof} 
Let $k \in \mathbb{R}$. Since $\lim_{x \rightarrow \infty}\text{arccosh}(x)=\infty$, there exists $M \geq 2$ such that $$\frac{2\sqrt{2}}{m} \ \text{arccosh}(M-1) \geq k.$$ 


Since $p_n \rightarrow \infty$, there exists $N \in \mathbb{N}$ such that $p_n > mM$ for all $n \geq N$. Fix $n \geq N$ and suppose $\gamma \in \Gamma_n^{(p_n)}$ is semi-simple. Then for each power $q$ of $\gamma$, $\gamma^q$ is a semi simple element of $\Gamma_n^{(p_n)}$. Hence $\text{tr}(\gamma^q) \equiv m \ \text{mod} \ p_n$, so 
$$\text{tr}(\gamma^q) = p_n\alpha_q + m$$
for some $\alpha_q \in \mathcal{O}_d$. By the argument in the second paragraph\footnote{The argument in the aforementioned paragraph does not use that $\Gamma$ is derived from a central simple algebra, only that $\gamma$ is a semi-simple element and Newton's identities to obtain a formula for the characteristic polynomial of $\gamma$ in terms of the trace of powers of $\gamma$.}  of the proof of Theorem 5.1 in \cite{Lapan1}, there exists an integer $q$, $|q| \leq \frac{m}{2}$ such that $\text{tr}(\gamma^q) \neq m$. Set $\alpha := \alpha_q$. Since $\text{tr}(\gamma^q) \neq m$, $\alpha \neq 0$. Hence 
\begin{align*}|\text{tr}(\gamma^q)| & > m(M |\alpha|-1) \  \ \  \  \ \text{and} \\
|\tau(\text{tr}(\gamma^q))|  & > m(M |\tau(\alpha)|-1)
\end{align*}
For any $\alpha \in \mathcal{O}_d$, max$(|\alpha|, |\tau(\alpha)|) \geq 1$. Thus
$$\text{max}\Big{\{}\frac{|\text{tr}(\gamma^q)|}{m},\frac{|\tau(\text{tr}(\gamma^q))|}{m} \Big{\}}> M-1.$$

\noindent By definition of the special unitary group, $\gamma^*=J_n\gamma^{-1}J_n^{-1}$. Hence  $$\tau(\text{tr}(\gamma^q))= \text{tr}((\gamma^q)^*) = \text{tr}(\gamma^{-q}).$$
Since $l(\gamma^q)=l(\gamma^{-q})$, Theorem 3 implies
$$l(\gamma^q) \geq \sqrt{2} \ \text{arccosh}\Big{(}\text{max}\Big{\{}1,\frac{|\text{tr}(\gamma^q)|}{m},\frac{|\tau(\text{tr}(\gamma^q))|}{m}\Big{\}}\Big{)}.$$
\noindent Since arccosh is increasing on $[1,\infty)$ and $M \geq 2$, 
$$l(\gamma^q) \geq \sqrt{2}  \  \text{arccosh}\Big{(} M-1\Big{)} .$$
Since $l(\gamma^q) = |q| l (\gamma)$ and $|q| \leq \frac{m}{2}$, 
$$l(\gamma) \geq \frac{2\sqrt{2}}{m} \  \text{arccosh}\Big{(}M-1\Big{)} \geq k .$$
Hence 
$$\text{sys}(\Gamma_n^{(p_n)}) \geq k.$$
Since $k$ is arbitrary, this completes the proof. 
\end{proof}

\bigskip

\section{Result}

\noindent \textbf{Theorem 1.}
    \textit{Fix square free $d \in \mathbb{Z}_{\geq 2}$ and let
    $$\mathcal{L} = SU(I_{8};\mathcal{O}_d,\tau). $$
    There exists a sequence of non-uniform arithmetic lattices $\Lambda_n < SL(8,\mathbb{R})$ commensurable to $\mathcal{L}$ such that 
    $$\text{sys}(\Lambda_n) \rightarrow \infty$$
    as $n \rightarrow \infty$, yet every $\Lambda_n$ contains a fixed hyperbolic 3-manifold group $\Pi$, a finite index subgroup of vol3.}

\vspace{0.1cm}
\noindent \textbf{Remark.} The 3-manifold group $\Pi$ contains surface groups, so each $\Lambda_n$ contains a fixed surface group. In particular, the systolic genus of these lattices is bounded from above for all $n$. \\

Vol3 is an arithmetic hyperbolic 3-manifold with the third lowest volume in the census. The fundamental group of vol3, which we will also refer to as vol3, has presentation 

$$\text{vol3}=\langle a,b \ | \ aabbABAbb; \ aBaBabaaab \rangle \vspace{0.15cm}$$

\noindent  where $A = a^{-1}$ and $B = b^{-1}$. The hyperbolic representation of vol3 into $SO(3,1)$ admits discrete and faithful deformations in $SL(4,\mathbb{R})$. An explicit one-parameter family of these deformations was found by Cooper, Long, and Thistlethwaite \cite{Cooper1}. Vol3 covers an orbifold, denoted vol3$/\langle u \rangle$, which has a simpler representation
$$\rho_t:\text{vol3}/\langle u \rangle \rightarrow SL(4, \mathbb{Q}(t,\sqrt{t^2-1},\sqrt{t^2+2})) \vspace{0.15cm} $$
for $t \geq 1$. The representation in \cite{Cooper1} uses parameter $v$ instead of $t$, with the substitution $v=2t$. Two elements, denoted here $u$ and $c$, generate vol3$/\langle u \rangle$, and the image of these elements under $\rho_t$ are listed in the appendix, as well as in the accompanying mathematica file \cite{Hillen}. To recover the manifold group, one can use the relations $a=u^2c$ and $b=(aua)^{-1}u$. In practice, we work with the orbifold group when interacting with the explicit matrix representation.
 
 The hyperbolic representation is at $t=1$ and for real values $t\geq 1$ the representation is the holonomy of a real projective structure on vol3, and is thus discrete and faithful.  It is not clear how to specialize $t$ to ensure the entries of the image all lie in a ring of integers over some field. As luck would have it, we found 16 specific elements $\{1,g_1,\dots,g_{15}\} \in$ vol3$/\langle u \rangle $ such that

\begin{itemize}
    \item $\mathcal{B}_v = \{\rho_v(1),\rho_v(g_1),\dots,\rho_v(g_{15})\}$ is a basis for the vector space $M_{4 \times 4}(\mathbb{R})$.
\item The left regular representation of vol3$/\langle u \rangle$ with respect to the basis $\mathcal{B}_v$ yields a representation 
$$\eta_{t}: \text{vol3}/\langle u \rangle \rightarrow SL(16,\mathbb{Q}(t,\sqrt{t^2-1}))$$
\item Both $\eta_t(u)$ and $\eta_t(c)$ have entries in $\mathbb{Z}[t,\sqrt{t^2-1}]$
\end{itemize}

In an attempt to find an integral representation of smaller dimension, we study invariant subspaces of this 16 dimensional representation. By considering eigenspaces of elements in the centralizer of $\eta_t(\text{vol3}/\langle u \rangle)$, we were able to find an 8 dimensional invariant subspace, whose corresponding representation has entries in $\mathbb{Z}[t, \sqrt{t^2-1}]$. Let 
$$\omega_t : \text{vol3}/\langle u \rangle \rightarrow SL(8,\mathbb{Z}[t,\sqrt{t^2-1}])$$
denote this representation. 
The matrices $\omega_t(u)$ and $\omega_t(c)$ are listed in the appendix. This representation is conjugate to $\rho_t \oplus \rho_t$, and hence faithful. Computations confirming $\omega_t$ is conjugate to $\rho_t \oplus \rho_t$, the 16 dimensional representation $\eta_t$, as well as the the explicit 8 dimensional subspace mentioned above can all be found in the accompanying mathematica file \cite{Hillen}. 

\bigskip

\section{Proof}

\noindent For the remainder of this paper, fix square free $d \in \mathbb{Z}_{\geq 2}$ and let 
$$\mathcal{L} := SU(I_{8};\mathcal{O}_d,\tau). $$

\vspace{0.2cm}
Interpreting $t$ as transcendental, let $\tau \in \text{Gal}(\mathbb{Q}(t,\sqrt{t^2-1})/\mathbb{Q}(t))$ be the involution sending $\sqrt{t^2-1} \mapsto -\sqrt{t^2-1}$. For a $\tau-$Hermitian form $J_t \in (\mathbb{Q}(t, \sqrt{t^2-1}))^m$, let
$$SU(J_t;\mathbb{Q}(t,\sqrt{t^2-1}),\tau) := \{M \in SL(m,\mathbb{Q}(t))) \ | \ M^*J_tM=J_t\} $$
for $M^* := \tau(M)^{\top}$. \\

We start the proof of Theorem 1 by finding a sequence of arithmetic lattices commensurable to $\mathcal{L}$ which contain vol3. 
\begin{lemma}\label{lemma 1}
   \textit{ There exists a family of Hermitian forms $J_t \in SL(8,\mathbb{Q}(t))$ such that 
    \begin{itemize}
        \item For $t \geq 1$, $J_t$ is full rank with det$(J_t)$ equal to a square in $\mathbb{Q}(t)$, and 
        \item $\omega_t(\text{vol3}) < SU(J_t;\mathbb{Q}(t,\sqrt{t^2-1}),\tau)$.
    \end{itemize}
    Moreover, there exists a sequence $t_n \rightarrow \infty$ such that 
    $$\omega_{t_n}(\text{vol3}) < SU(J_{t_n};\mathcal{O}_d,\tau)$$
    for all $n \in \mathbb{N}$. By Proposition \ref{our lattices}, the $SU(J_{t_n};\mathcal{O}_d,\tau)$ are commensurable to $\mathcal{L}$ for $n >>0$. }
\end{lemma}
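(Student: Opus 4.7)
The plan has two stages: constructing the family $J_t$ symbolically over $\mathbb{Q}(t)$, and then specializing to an integer sequence $t_n \to \infty$ so that all relevant data lands in $\mathcal{O}_d$.

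For the first stage, I would treat $t$ as transcendental and search directly for symmetric $J \in M_{8 \times 8}(\mathbb{Q}(t))$ satisfying $\omega_t(g)^* J \omega_t(g) = J$ on the two generators $g \in \{u,c\}$ of $\text{vol3}/\langle u \rangle$. This is a $\mathbb{Q}(t)$-linear system in the $36$ unknown entries of a symmetric $8 \times 8$ matrix, and can be solved explicitly using the matrices $\omega_t(u), \omega_t(c)$ from the appendix. A symmetric form with entries in $\mathbb{Q}(t)$ is automatically $\tau$-Hermitian, since $\tau$ fixes $\mathbb{Q}(t)$ pointwise. Since $\omega_t \cong \rho_t \oplus \rho_t$ and the hyperbolic holonomy $\rho_1$ preserves the standard $(3,1)$-form, the solution space is non-trivial; in fact an application of Schur's lemma to the two irreducible summands gives a three-dimensional space of invariant symmetric bilinear forms, leaving enough freedom to pick $J_t$ with $\det(J_t)$ a square in $\mathbb{Q}(t)$. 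Rescaling by $\lambda \in \mathbb{Q}(t)^\times$ multiplies the determinant by $\lambda^8$, always a square, and the three-parameter family of determinants provides further latitude. Generic choices keep $J_t$ full rank since the determinant vanishes only on a finite set of $t$.

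For the second stage, the entries of $\omega_t$ lie in $\mathbb{Z}[t, \sqrt{t^2-1}]$, so to land in $\mathcal{O}_d$ it suffices to find integer values of $t$ for which $\sqrt{t^2-1} \in \mathcal{O}_d$. Writing $\sqrt{t^2-1} = k\sqrt{d}$ reduces this to the Pell equation $t^2 - d k^2 = 1$, which for any square-free $d \geq 2$ admits infinitely many integer solutions $(t_n, k_n)$ with $t_n \to \infty$. At each such $t_n$, the involution $\tau$ on $\mathbb{Q}(t, \sqrt{t^2-1})$ specializes precisely to the non-trivial Galois involution of $\mathbb{Q}(\sqrt{d})/\mathbb{Q}$, so the invariance identity carries over verbatim over $\mathcal{O}_d$. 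After clearing a common integer denominator of $J_{t_n}$, a rescaling that preserves both invariance and the square-determinant condition, we obtain $J_{t_n} \in M_{8\times 8}(\mathcal{O}_d)$ with rational square determinant. Proposition \ref{our lattices} then yields commensurability of $SU(J_{t_n}; \mathcal{O}_d, \tau)$ with $\mathcal{L}$ for all sufficiently large $n$.

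The main obstacle is the square-determinant condition in the first stage. The determinant of a generic invariant symmetric $J_t$ is a rational function of $t$ with no a priori reason to be a square, and squareness cuts out a proper subvariety of the three-dimensional solution space. This step is not conceptually deep but cannot be bypassed on general principles; it requires a direct Mathematica-assisted calculation with the explicit $\omega_t(u), \omega_t(c)$ to exhibit a coefficient choice that works. Once $J_t$ is in hand, the Pell specialization and the Landherr-based commensurability argument are essentially routine.
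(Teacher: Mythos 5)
Your proposal follows essentially the same route as the paper: solve the linear invariance equations for a sesqui-symmetric $J_t$ using the explicit $\omega_t(u),\omega_t(c)$, verify the determinant is a square, and specialize via Pell's equation $t^2-dk^2=1$ to land $\omega_{t_n}$ in $\mathcal{O}_d$. Two small remarks: the paper's computation shows $\det(J_t)$ is a square in $\mathbb{Q}(t)$ for \emph{every} choice of free parameters (so the ``squareness cuts out a proper subvariety'' worry does not materialize, though like you it still requires the Mathematica check to see this); and the step of clearing denominators so that $J_{t_n}\in M_{8\times 8}(\mathcal{O}_d)$ is unnecessary, since $SU(J;\mathcal{O}_d,\tau)$ is by definition the set of $\mathcal{O}_d$-matrices preserving a form $J$ with entries merely in $F$, and rescaling $J$ does not change the group anyway.
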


\begin{proof}
    The first part of the lemma follows from a computation: We solve for  $J=J_t \in GL(8,\mathbb{R})$ such that $\omega_t(u)^*J\omega_t(u)=J$ and $\omega_t(c)^*J\omega_t(c)=J$. By replacing $J$ with $J+J^*$, we can ensure $J$ is sesqui-symmetric. There are 4 free variables in the solution for $J$, and by making a choice of numerical value for each free variable\footnote{Even leaving all four free variables in $J$ as unknowns, the determinant of $J$ is a square in $\mathbb{Q}(t)$. Thus choosing values in $\mathbb{Q}(t)$ for the free variables does not change the resulting commensurability class of the lattice.} we obtain a $\tau$-Hermitian form $J$ which is full rank (for all but finitely many choices of $t$) . Indeed,
    $$\text{det}(J_t) = \frac{16 (3 - 4 t^2)^4}{(1 - 4 t^2)^2}$$
    which is a square in $\mathbb{Q}(t)$.
    Moreover, for $t\geq 1$, det$(J_t)$ is nonzero, so $J_t$ is full rank. The matrix $J_t$ can be found in the accompanying mathematica file \cite{Hillen}.

    To guarantee $\omega_t($vol3$)$ lies in an \textit{integral} special unitary group, is it necessary for the entries of $\omega_t(a)$ and $\omega_t(b)$ to lie in $\mathcal{O}_d$. It is sufficient to choose $t \in \mathbb{N}$ so that $\sqrt{t^2-1} \in\mathcal{O}_d$. Equivalently, we need to find infinitely many integral solutions $(t,y)$ to Pell's equation:
    $$t^2 - d y^2 = 1. $$
It is well known that for any positive non-square $d \in \mathbb{Z}$, Pell's equation has a fundamental solution $(t_1,y_1) \in \mathbb{N}^2$ and the other solutions are exactly the integers $(t_n,y_n)$ such that
    $$u^n = t_n + y_n\sqrt{d}$$
    for $u=t_1+y_1\sqrt{d}$. Therefore, for this sequence $\{t_n\}_{n=1}^\infty$, 
    \vspace{0.2cm}
    $$\omega_{t_n}(vol3) < SU( J_{t_n} ;\mathcal{O}_{d} ,\tau )  $$ for all $n \in \mathbb{N}$. We can write
    $t_n = \frac{1}{2}(u^n + u^{-n})$, so the sequence $t_n \rightarrow \infty$ as $n \rightarrow \infty$. 
\end{proof}

Let $\Gamma_n := SU( J_{t_n} ;\mathcal{O}_{d} ,\tau )$ for the sequence $\{t_n\}_{n \in \mathbb{N}}$ from Lemma \ref{lemma 1}. Our next goal is to find finite index subgroups of $\Gamma_n$ whose systole goes to infinity as $n \rightarrow \infty$. This is accomplished by considering the principal congruence subgroups described in Section 3. We will let
$$\Lambda_{n}:=\Gamma_n^{(p_n)}$$ 
for carefully chosen primes $p_n \rightarrow \infty$. By Proposition \ref{systole grows},  sys$(\Lambda_n) \rightarrow \infty$ as $n \rightarrow \infty$. \\

We will choose primes $p_n$ so that $\omega_{t_n}(\Pi) < \Lambda_{n}$ for $\Pi < $ vol3 a finite index subgroup. More specifically, set
$$\Pi = \text{Ker}(\omega_0:\text{vol3} \rightarrow SL(8,\mathbb{Z}[i]))\vspace{0.15cm}.$$
At $t=0$, one can check that $|\omega_0(\text{vol3})|=320$ computationally, so indeed $\Pi$ is finite index in vol3.\\

Consider the following diagram:

\vspace{0.5cm}

\begin{center}

\tikzset{every picture/.style={line width=0.75pt}} 

\begin{tikzpicture}[x=0.75pt,y=0.75pt,yscale=-1,xscale=1]

\draw    (105,123) -- (105,171) ;
\draw [shift={(105,173)}, rotate = 270] [color={rgb, 255:red, 0; green, 0; blue, 0 }  ][line width=0.75]    (6.56,-2.94) .. controls (4.17,-1.38) and (1.99,-0.4) .. (0,0) .. controls (1.99,0.4) and (4.17,1.38) .. (6.56,2.94)   ;
\draw    (123,190) -- (175,190.48) ;
\draw [shift={(177,190.5)}, rotate = 180.53] [color={rgb, 255:red, 0; green, 0; blue, 0 }  ][line width=0.75]    (6.56,-2.94) .. controls (4.17,-1.38) and (1.99,-0.4) .. (0,0) .. controls (1.99,0.4) and (4.17,1.38) .. (6.56,2.94)   ;
\draw    (128,110) -- (332,110) ;
\draw [shift={(334,110)}, rotate = 180] [color={rgb, 255:red, 0; green, 0; blue, 0 }  ][line width=0.75]    (6.56,-2.94) .. controls (4.17,-1.38) and (1.99,-0.4) .. (0,0) .. controls (1.99,0.4) and (4.17,1.38) .. (6.56,2.94)   ;
\draw [shift={(128,110)}, rotate = 0] [color={rgb, 255:red, 0; green, 0; blue, 0 }  ][line width=0.75]      (0,-6.71) .. controls (-1.85,-6.71) and (-3.35,-5.21) .. (-3.35,-3.35) .. controls (-3.35,-1.5) and (-1.85,0) .. (0,0) ;
\draw  [dash pattern={on 4.5pt off 4.5pt}]  (126,29.5) -- (325,30) ;
\draw [shift={(327,30)}, rotate = 180.14] [color={rgb, 255:red, 0; green, 0; blue, 0 }  ][line width=0.75]    (7.65,-3.43) .. controls (4.86,-1.61) and (2.31,-0.47) .. (0,0) .. controls (2.31,0.47) and (4.86,1.61) .. (7.65,3.43)   ;
\draw [shift={(126,29.5)}, rotate = 0.14] [color={rgb, 255:red, 0; green, 0; blue, 0 }  ][line width=0.75]      (0,-7.83) .. controls (-2.16,-7.83) and (-3.91,-6.07) .. (-3.91,-3.91) .. controls (-3.91,-1.75) and (-2.16,0) .. (0,0) ;
\draw    (383,50.5) -- (383,92) ;
\draw [shift={(383,94)}, rotate = 270] [color={rgb, 255:red, 0; green, 0; blue, 0 }  ][line width=0.75]    (6.56,-2.94) .. controls (4.17,-1.38) and (1.99,-0.4) .. (0,0) .. controls (1.99,0.4) and (4.17,1.38) .. (6.56,2.94)   ;
\draw [shift={(383,50.5)}, rotate = 90] [color={rgb, 255:red, 0; green, 0; blue, 0 }  ][line width=0.75]      (0,-6.71) .. controls (-1.85,-6.71) and (-3.35,-5.21) .. (-3.35,-3.35) .. controls (-3.35,-1.5) and (-1.85,0) .. (0,0) ;
\draw    (384,127) -- (384,171) ;
\draw [shift={(384,173)}, rotate = 270] [color={rgb, 255:red, 0; green, 0; blue, 0 }  ][line width=0.75]    (6.56,-2.94) .. controls (4.17,-1.38) and (1.99,-0.4) .. (0,0) .. controls (1.99,0.4) and (4.17,1.38) .. (6.56,2.94)   ;
\draw    (105,46) -- (105,94) ;
\draw [shift={(105,96)}, rotate = 270] [color={rgb, 255:red, 0; green, 0; blue, 0 }  ][line width=0.75]    (6.56,-2.94) .. controls (4.17,-1.38) and (1.99,-0.4) .. (0,0) .. controls (1.99,0.4) and (4.17,1.38) .. (6.56,2.94)   ;
\draw [shift={(105,46)}, rotate = 90] [color={rgb, 255:red, 0; green, 0; blue, 0 }  ][line width=0.75]      (0,-6.71) .. controls (-1.85,-6.71) and (-3.35,-5.21) .. (-3.35,-3.35) .. controls (-3.35,-1.5) and (-1.85,0) .. (0,0) ;
\draw  [draw opacity=0] (257.87,145.59) .. controls (258.47,147.77) and (258.2,150.21) .. (256.92,152.35) .. controls (254.41,156.58) and (248.93,158.08) .. (244.68,155.7) .. controls (240.44,153.33) and (239.04,147.98) .. (241.56,143.76) .. controls (243.83,139.95) and (248.51,138.35) .. (252.52,139.82) -- (249.24,148.05) -- cycle ; \draw   (257.87,145.59) .. controls (258.47,147.77) and (258.2,150.21) .. (256.92,152.35) .. controls (254.41,156.58) and (248.93,158.08) .. (244.68,155.7) .. controls (240.44,153.33) and (239.04,147.98) .. (241.56,143.76) .. controls (243.83,139.95) and (248.51,138.35) .. (252.52,139.82) ;  
\draw    (257.89,145.65) -- (262,149) ;
\draw    (257.89,145.65) -- (254.71,149.59) ;
\draw    (280,188) -- (333,188) ;
\draw [shift={(335,188)}, rotate = 180] [color={rgb, 255:red, 0; green, 0; blue, 0 }  ][line width=0.75]    (6.56,-2.94) .. controls (4.17,-1.38) and (1.99,-0.4) .. (0,0) .. controls (1.99,0.4) and (4.17,1.38) .. (6.56,2.94)   ;

\draw (89,101) node [anchor=north west][inner sep=0.75pt]   [align=left] {$\displaystyle vol3$};
\draw (99,19) node [anchor=north west][inner sep=0.75pt]  [font=\normalsize] [align=left] {$\displaystyle \Pi $};
\draw (239,92.4) node [anchor=north west][inner sep=0.75pt]  [font=\small]  {$\omega _{t}$};
\draw (83,136.4) node [anchor=north west][inner sep=0.75pt]  [font=\small]  {$\omega _{0}$};
\draw (340,102.4) node [anchor=north west][inner sep=0.75pt]  [font=\normalsize]  {$SU( J_{t} ,\mathcal{O}_{d} ,\tau )$};
\draw (340,180.4) node [anchor=north west][inner sep=0.75pt]  [font=\normalsize]  {$SL( 8,\mathcal{O}_{d} /( p))$};
\draw (332,20.4) node [anchor=north west][inner sep=0.75pt]  [font=\normalsize]  {$\text{Ker}\left( \pi _{p} :SU( J_{t} ,\mathcal{O}_{d} ,\tau ) \ \rightarrow SL( 8,\mathcal{O}_{d} /( p))\right)$};
\draw (145,170.4) node [anchor=north west][inner sep=0.75pt]  [font=\small]  {$\pi _{p}$};
\draw (50,180.4) node [anchor=north west][inner sep=0.75pt]  [font=\normalsize]  {$SL( 8,\mathbb{Z}[ i])$};
\draw (390,136.4) node [anchor=north west][inner sep=0.75pt]  [font=\small]  {$\pi _{p}$};
\draw (185,179.4) node [anchor=north west][inner sep=0.75pt]  [font=\normalsize]  {$SL( 8,\mathbb{Z}[ i] /( p))$};
\draw (310,167.4) node [anchor=north west][inner sep=0.75pt]  [font=\small]  {$f_{*}$};

\end{tikzpicture}

\vspace{0.5cm}

\end{center}

\noindent The primes we choose will divide $t$. Since our choice of $t$ solves Pell's equation, we have $(pk)^2-1=dy^2$ for integers $k$ and $d$. Thus
$$f:\mathbb{Z}[i]/(p) \rightarrow \mathcal{O}_d/(p)$$
induced by sending $\overline{1} \mapsto \overline{1}$ and $\overline{i} \mapsto \overline{y\sqrt{d}}$ is a ring homomorphism. This induces a group homomorphism $$f_*:SL(8,\mathbb{Z}[i]/(p)) \rightarrow SL(8,\mathcal{O}_d/(p)).$$
\noindent Our purpose in constructing this diagram is in the observation that, as long as the diagram commutes, we can guarantee $\omega_t(\Pi) < \text{Ker}(\pi_p)$. In fact, by inspecting the representation (see Appendix), $p$ dividing $t$ is sufficient to guarantee the diagram commutes. \\

\begin{lemma}\label{lemma 2}
    \textit{Let $\{t_n\}$ be as in Lemma 1. Possibly passing to a subsequence of $t_n$, there exists a sequence of primes $p_n \rightarrow \infty$ with each $p_n$ dividing $t_n$. Hence 
    $$\omega_{t_n}(\Pi) < \Gamma_{n}^{(p_n)}$$ for all $n \in \mathbb{N}$. }

\end{lemma}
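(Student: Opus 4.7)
The plan is to reduce the lemma to a purely number-theoretic existence statement about the Pell solutions $t_n$ and then invoke a primitive prime divisor theorem. The commutative diagram constructed immediately before the lemma already guarantees that whenever a rational prime $p$ divides $t$, one has $\omega_t(\Pi) \subset \ker \pi_p$. Hence the real content of Lemma~\ref{lemma 2} is the following: after passing to a subsequence of $\{t_n\}$, there exist primes $p_n \to \infty$ with $p_n \mid t_n$.

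Recall from the proof of Lemma~\ref{lemma 1} that $t_n + y_n\sqrt{d} = u^n$ for the fundamental unit $u = t_1 + y_1\sqrt{d}$, and since $u \,\tau(u) = 1$ we have $t_n = \tfrac{1}{2}(u^n + u^{-n})$; equivalently, $t_n$ satisfies the linear recurrence $t_{n+1} = 2 t_1 \, t_n - t_{n-1}$ with $t_0 = 1$. Thus $V_n := 2 t_n$ is precisely the companion Lucas sequence attached to the pair of real algebraic integers $(\alpha,\beta) = (u, u^{-1})$, whose ratio $\alpha/\beta = u^2$ is not a root of unity. This places us squarely in the non-degenerate, non-exceptional setting for Lucas-sequence results.

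The key input is the Bilu--Hanrot--Voutier primitive prime divisor theorem (refining the classical theorem of Carmichael): for all $n$ larger than an effective absolute bound, the Lucas number $V_n$ admits a \emph{primitive} prime divisor $p_n$, meaning $p_n \mid V_n$ but $p_n \nmid V_m$ for every $0 < m < n$. For $n$ large enough, $p_n$ is odd (as $V_1 = 2 t_1$) and therefore $p_n \mid t_n$. Since a fixed prime can be primitive for at most one index, the assignment $n \mapsto p_n$ is injective; hence the primes $\{p_n\}$ form an infinite set of distinct primes, and in particular the sequence $p_n \to \infty$. Restricting to this subsequence of indices, $p_n \mid t_n$ together with the commutative diagram preceding the lemma yields $\omega_{t_n}(\Pi) \subset \ker \pi_{p_n} = \Gamma_n^{(p_n)}$, as required.

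The main (and essentially only) obstacle is justifying the existence of primitive prime divisors; this is what prevents a one-line argument. If one wishes to avoid citing Bilu--Hanrot--Voutier, an alternative route is to argue by contradiction: if no such divergent subsequence of primes existed, the $t_n$ would eventually all be supported on a fixed finite set $S$ of primes, and then the identity $(t_n - 1)(t_n + 1) = d\, y_n^2$ combined with the Thue--Mahler / $S$-unit equation theorem (or with Baker's theorem on linear forms in logarithms applied to $u^n + u^{-n}$) would contradict the exponential growth $t_n \asymp u^n$. Either route gives the lemma; I would favor the primitive-divisor version for brevity.
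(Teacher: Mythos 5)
Your proposal is correct and follows essentially the same route as the paper: both reduce the lemma (via the commutative diagram) to finding primes $p_n \to \infty$ with $p_n \mid t_n$, identify $2t_n = u^n + u^{-n}$ as a companion Lucas sequence for the pair $(u, u^{-1})$, invoke the primitive prime divisor theorem (the paper cites Carmichael, you cite Bilu--Hanrot--Voutier; both suffice here), and discard $p = 2$ to get odd primitive divisors of $t_n$. The only cosmetic differences are that the paper explicitly verifies the Lucas-pair axioms in a separate claim and passes to a strictly increasing subsequence of primes, whereas you note distinctness forces divergence; your side remark about the Thue--Mahler alternative is not in the paper but is not needed.
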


\begin{proof}
   For each $n$, we would like a prime $p_n$ dividing $t_n$, but not dividing $t_{m}$ for $m<n$. The latter condition will ensure a subsequence of $p_n$ is strictly increasing.  Some results from number theory will help us accomplish this. We start with the following definition from \cite{BHV}: 

    \begin{definition}
        \textit{A pair of algebraic integers $(\alpha,\beta)$ is called a \textbf{Lucas pair} if $\alpha+\beta$ and $\alpha \beta$ are non-zero coprime rational integers and $\frac{\alpha}{\beta}$ is not a root of unity.}
    \end{definition}

     By \cite{Ca}, (or see Theorem A in \cite{BHV}) if $(\alpha,\beta)$ is a Lucas pair, then for $n >>0 $ the $n$-th term of the sequence
    $$S_n :=\alpha^n + \beta^n$$
    has a primitive prime divisor, i.e. a prime $p_n$ such that $p_n$ divides $S_n$ but not $S_m$ for $m<n$. \\

    \noindent \textit{Claim: Let $u=t_1 + y_1\sqrt{d}$ be as in the proof of Lemma 1. Then $(u,1/u)$ is a Lucas pair.} \\

    \noindent Observe that
    $$2t_n = u^n+\Big{(}\frac{1}{u}\Big{)}^n$$
    Therefore, pending the claim above, the sequence $2t_n$ has a corresponding sequence of primitive prime divisors, $p_n$ for $n>>0$. Passing to a subsequence, we may assume the $p_n$ are strictly increasing, each prime $p_n \neq 2$. Hence each $p_n$ divides $t_n$ and $p_n \rightarrow \infty$.  Hence the diagram above commutes and therefore
    $$\omega_{t_n}(\Pi) < \Gamma_n^{(p_n)}.$$

    \noindent \textit{Proof of Claim:} Since $u \in \mathbb{Z}[\sqrt{d}] \subseteq \mathcal{O}_d$, we know $u$ is an algebraic integer. Since $t_1^2-dy_1^2=1$,
    \begin{align*}
        \frac{1}{u} & = t_1 - y_1\sqrt{d}
    \end{align*}
    Hence $\frac{1}{u}$ is also an algebraic integer. Further, $u +\frac{1}{u} = 2t_1 $ and $u (\frac{1}{u}) = 1$, so $u + \frac{1}{u}$ and $u(\frac{1}{u})$ are non-zero coprime rational integers. Moreover $\frac{u}{1/u} = u^2$ is not a root of unity. Hence $(u,\frac{1}{u})$ is a Lucas pair. 
  \end{proof}
 
\vspace{-0.1cm}
\noindent Now we put these pieces together to prove the main theorem:\\

\noindent \textbf{Theorem 1.}
    \textit{Fix square free $d \in \mathbb{Z}_{\geq 2}$ and let
    $$\mathcal{L} := SU(I_{8};\mathcal{O}_d,\tau). $$
    There exists a sequence of non-uniform arithmetic lattices $\Lambda_n < SL(8,\mathbb{R})$ commensurable to $\mathcal{L}$ such that 
    $$\text{sys}(\Lambda_n) \rightarrow \infty$$
    as $n \rightarrow \infty$, yet every $\Lambda_n$ contains a fixed hyperbolic 3-manifold group $\Pi$, a finite index subgroup of vol3.}\\

\begin{proof}
    For each $n \in \mathbb{N}$, let $\Gamma_n := SU(J_{t_n};\mathcal{O}_d,\tau)$ for forms $J_{t_n}$ from Lemma 1 and let
    $$\Lambda_{n} := \Gamma_n^{(p_n)}$$
    be the principal congruence subgroup of $\Gamma_n$ of level $p_n$  for primes $p_n$ from Lemma 2. By Lemma 1, each $\Gamma_n$ is a non-uniform lattice commensurable to $\mathcal{L}$. Since $\Lambda_{n}$ is a finite index subgroup of $\Gamma_n$, each $\Lambda_n$ is also a non-uniform lattice commensurable to $\mathcal{L}$. By Lemma 2, $\omega_{t_n}(\Pi) < \Lambda_n$  and by Proposition \ref{systole grows}, sys$(\Lambda_{n}) \rightarrow \infty$ as $n \rightarrow \infty$.  
\end{proof}

 \bigskip

\section{Examples}

\begin{example} We describe how to find the first few terms of the sequence $(t_n, p_n)$ in the case that $d=3$. Since $3 \not\equiv 1 $(mod $4$), we have $\mathcal{O}_3 = \mathbb{Z}[\sqrt{3}]$. \\

To guarantee the entries of $\omega_t(u)$ and $\omega_t(c)$ lie in $\mathcal{O}_3$, we solve for $t$ in Pell's equation:
$$t^2-3y^2 = 1.$$

The fundamental solution is $(t_1,y_1)=(2,1)$. 
Let $u = 2+\sqrt{3}$. Powers of $u$ allow us to find all the other solutions. To ensure the diagram commutes and consequently that $\omega_t(\Pi) < \Gamma_n^{(p)}$, we need to find corresponding primes $p_n$ such that $t_n = 0 \text{ mod } p_n$. \\
\begin{table}[H]
    \centering
    \begin{tabular}{|c|l|l|l|}
    \hline
        $n$ & $u^n$& $t_n$ & $p_n$ \\
        \hline
        $1$ & $2+\sqrt{3}$ & $2$ & $2$\\
        $2$ & $7+4\sqrt{3}$ & $7$ & $7$ \\
        $3$ &  $26 + 15 \sqrt{3}$ & $26$& $13$ \\
        $4$ & $97 + 56 \sqrt{3}$ & $97$ & $97$ \\
        $5$ & $362 + 209 \sqrt{5}$ & $362$ & $181$ \\
        \hline
    \end{tabular}
    \label{tab:my_label}
\end{table}
Observe each prime does not divide any of the previous values of $t_n$. \\
\end{example}

\begin{example}
Now let $d=5$. Since $5 \equiv 1 $ (mod $4$), $\mathcal{O}_5 = \mathbb{Z}[\frac{1+\sqrt{5}}{2}]$. We first solve Pell's equation:
$$t^2 - 5y^2 = 1.$$ 
The fundamental solution is $(t_1,y_1)=(9,4)$. Following the same process as above, we find the first 5 terms of the sequences $t_n$ and $p_n$. \\

\begin{table}[H]
    \centering
    \begin{tabular}{|c|l|l|l|}
    \hline
        $n$ & $u^n$& $t_n$ & $p_n$ \\
        \hline
        $1$ & $9+4\sqrt{5}$ & $9$ & $3$\\
        $2$ & $161+72\sqrt{5}$ & $161$ & $7$ \\
        $3$ &  $2889 + 1292 \sqrt{5}$ & $2889$& $107$ \\
        $4$ & $51841 + 23184 \sqrt{5}$ & $51841$ & $1103$ \\
        $5$ & $930249 + 416020 \sqrt{5}$ & $930249$ & $2521$ \\
        \hline
    \end{tabular}
    \label{tab:my_label}
\end{table}
    
\end{example}
\bigskip

\section{Further Questions and Observations}

\begin{enumerate}
   
  \item \textit{Thin 3-manifold Group:} Our 8 dimensional representation $\omega$ is conjugate to two copies of the original representation $\rho$. By the proof of Theorem 2.1 in \cite{Long2}, $\rho_t(\text{vol3})$ is Zariski dense in $SL(4,\mathbb{R})$. Therefore the Zariski closure of $\omega_t($vol3$)$ in $SL(8,\mathbb{R})$ is a subgroup isomorphic to $SL(4,\mathbb{R})$. 
  We also observe that for any of our chosen values of $t$, $\omega_{t}(\Pi)$ is infinite index in $SU(J_{t};\mathcal{O}_d,\tau)$. To see this, observe that $SU(J_{t};\mathcal{O}_d,\tau)$ is an irreducible arithmetic lattice of $\mathbb{R}-$rank $\geq 2$. Thus, by Margulis's Normal Subgroup Theorem, if $\omega_{t}($vol3$)$ were finite index in $SU(J_{t};\mathcal{O}_d,\tau)$, then the abelianization of $\omega_{t}($vol3$)$ would be finite. However, vol3 is an arithmetic hyperbolic 3-manifold with positive first Betti number. Thus vol3 must have infinite virtual first Betti number, so the abelianization cannot be finite (see \cite{Cooper2}).
  It would be interesting to know whether $\omega_t($vol3$)$ is infinite index in the intersection of $SU(J_{t};\mathcal{O}_d,\tau)$ and Zcl$(\omega_t($vol3$)) \cong SL(4,\mathbb{R})$.  If so, then  $\omega_t($vol3$)$ is a thin 3-manifold group. \\
   
    \item \textit{Non-Hitchin Surface Subgroups:} At $v=2$ the image $\rho_2(\text{vol3})$ lies in $SO(3,1)$, so every non-trivial semi-simple element has a pair of eigenvalues with the same modulus. Thus $\rho_2$ is not Borel Anosov (for example, by the characterization of Anosov in theorem 4.3 of \cite{Kassel}). Moreover, the restriction to any subgroup is also not Borel Anosov. Hence for any value of $v \geq 2$ and any surface subgroup $H$, $\rho_v(H)$ is a discrete and faithful representation in $SL(4,\mathbb{R})$ which is not on the Hitchin component. This is notable, since many examples of interesting representations of surface subgroups in the literature are in the Hitchin component. In particular, if one can find a surface subgroup of $\rho_v($vol3$)$ which is Borel Anosov for some value of $v>2$, then it would be an example of a non-Hitchin Borel Anosov representation, answering a question posed by Canary \cite{Canary}, Question 50.6. 
\end{enumerate}
\bigskip

\section{Appendix}
\subsection{The original 4 dimensional representation of vol3}The hyperbolic 3-manifold vol3 has presentation
$$\text{vol3}=\langle a,b \ | \ aabbABAbb; \ aBaBabaaab \rangle \vspace{0.1cm}$$
\noindent  where $A = a^{-1}$ and $B = b^{-1}$. The orbifold group vol3 $/ \langle u \rangle$ which contains vol3 as a subgroup is generated by $u$ and $c$, of order 4 and 2 respectively. To recover vol3, we have $a=u^2c$ and $b=(aua)^{-1}u$. A conjugate of the image of $u$ and $c$ under the original 4D representation from section 2.4 of \cite{Cooper3} with the substitution $t = \frac{v}{2}$ are: \\

\vspace{0.5cm}

\begin{adjustwidth}{-0.1cm}{0pt}
$\rho_t(u)= \left(
\begin{array}{cccc}
1 & 0 & 0 & 0\\
0& 1& 0& 0 \\
0& 0& \sqrt{\frac{(t^2-1)}{(2 + 
     t^2)}}& 1\\
0& 0& -\frac{(1 + 2 t^2)}{(2 + 
      t^2)} & -\sqrt{\frac{( t^2-1)}{(2 + t^2)}}
\end{array}\right)$

\end{adjustwidth}

\vspace{1cm}

\begin{adjustwidth}{-0.1cm}{0pt}
$\rho_t(c)=\left(
\begin{array}{cccc}
\frac{1}{2} (t + \sqrt{2 + t^2})& 0& \frac{1}{2} (1 - t^2 - t \sqrt{2 + t^2})& 0\\
0&  \frac{1}{2}(t - \sqrt{2 + t^2})& 0& \frac{1}{2} (-1 + t^2 - t \sqrt{2 + t^2})\\
1&0 & \frac{1}{2}(-t - \sqrt{2 + t^2})& 0\\
0& -1& 0&  \frac{1}{2} (-t + \sqrt{2 + t^2})
\end{array}\right)$

\end{adjustwidth}
\vspace{0.5cm}

The exact representation from \cite{Cooper3} and the matrix which conjugates it to $\rho_t$ listed above can be found in the accompanying mathematica file \cite{Hillen}. \\

Let $\tau \in \text{Gal}(\mathbb{Q}(t,\sqrt{t^2-1})/\mathbb{Q}(t))$ sending $\sqrt{t^2-1}$ to $-\sqrt{t^2-1}$. There is one $\tau-$Hermitian form (up to scalar multiples) which both generators $\rho_t(u)$ and $\rho_t(c)$ preserve: \\
\vspace{0.25cm} 

\begin{adjustwidth}{-0.1cm}{0pt}
$M_t=\left(
\begin{array}{cccc}
-\frac{(2\sqrt{2 + t^2})}{(
  2 t + t^3 - \sqrt{2 + t^2} + t^2 \sqrt{2 + t^2})}& 0& 0& 0\\ 0& \frac{2 (2 + t^2)}{((1 + 2 t^2) (1 - t^2 + 
    t\sqrt{2 + t^2}))}& 0& 0\\
    0& 0& 1& 0 \\
    0& 0& 0& \frac{(2 + t^2)}{(
 1 + 2 t^2)}
\end{array}\right)$
\end{adjustwidth}
\vspace{0.5cm}
Note that for $t=1$, we have $\sqrt{t^2-1}=0$, so the involution $\tau$ is trivial. Since $M_1$ is a diagonal matrix with signature $(3,1)$, it is clear that $\rho_1$ lies in $SO(3,1)$. \\
\vspace{0.5cm}
\smallskip

\subsection{The explicit 8 dimensional representation of vol3:} Our 8D representation which is integral for values of $t$ solving Pell's equation is generated by:
\vspace{0.5cm}

\begin{adjustwidth}{-0.1cm}{0pt}
$\omega_t(u)= \left(
\begin{array}{cccccccc}
1& 0& 0& 2 t& -2 t& 2 t& 0& 0\\
0& 1& 0& t - \sqrt{t^2-1}& -t + \sqrt{t^2-1}& t - \sqrt{t^2-1}& 1& 0\\
0& 0& 1& -1& 1& -1& 0& 0\\
0& 0& 1& 0& 0& 0& 0& 0\\
0& 0& 1&  0& 0& 1& -t + \sqrt{t^2-1}& -t -\sqrt{t^2-1}\\
0& 0& 1& -1& 0&
   1& -t + \sqrt{t^2-1}& -t - \sqrt{t^2-1}\\
0& 0& 0& 0& 0& 0& 0& -1\\
0& 0& 0& 0& 0& 0& 1& 0
\end{array}\right)$

\end{adjustwidth}

\vspace{1cm}

\begin{adjustwidth}{-0.1cm}{0pt}
$\omega_t(c)=\left(
\begin{array}{cccccccc}
0 & 0 & 1 & 0 & 0 & 0 & 0 & 0\\
0 & 0 & 0 & 1 & 0 & 0 & 0 & 0\\ 
1 & 0 & 0 & 0 & 0 & 0 & 0 & 0\\ 
0 & 1 & 0 & 0 & 0 & 0 & 0 & 0\\
0 & 0 & 0 & 0 & 0 & 0 & -1 & 1\\ 
0 &  0 & 0 & 0 & 0 & 0 & -1 & 0\\ 
0 & 0 & 0 & 0 & 0 & -1 & 0 & 0\\ 
0 & 0 & 0 & 0 & 1 & -1 & 0 & 0

\end{array}\right)$

\end{adjustwidth}
\vspace{0.5cm}
An explicit computation found in \cite{Hillen} shows this representation is conjugate to $\rho_t \oplus \rho_t$. One can also find the $\tau$- Hermitian form $J_t$ which both $\omega_t(u)$ and $\omega_t(c)$ preserve in the mathematica file in \cite{Hillen}.

\newpage

\vspace{1cm}
\noindent Department of Mathematics,\\ University of California,\\ Santa Barbara, 
CA 93106.\\
\noindent Email:~paigehillen@math.ucsb.edu\\[\baselineskip]

\begin{thebibliography}{9999}


\bibitem{Bel} M. Belolipetsky, {\em On $2$-systoles of hyperbolic $3$-manifolds},  Geom. Funct. Anal. {\bf 23} (2013), 813--827. 

\bibitem{BD} M. Belolipetsky and C. D\'oria, {\em Free subgroups of 3-manifold groups},  Groups Geom. Dyn. {\bf 14} (2020), 243--254.



\bibitem{BHV} Y. Bilu, G. Hanrot, and P. M. Voutier, {\em Existence of primitive divisors of Lucas and Lehmer numbers}, J. Reine Angew. Math. {\bf 539} (2001), 75--122.

\bibitem{Canary} R. Canary, {\em Anosov Representations: Informal Lecture Notes}, (2021).

\bibitem{Ca} P. D. Carmichael, {\em On the numerical factors of the arithmetic forms $\alpha^n\pm \beta^n$}, Annals of Math. {\bf 15} (1913), 30--50.


\bibitem{Cooper2} D. Cooper, D. D. Long and A.W. Reid, {\em On the betti numbers of arithmetic hyperbolic 3-manifolds}, Geometry and Topology. {\bf 11} (2007), 226--2276.

\bibitem{Cooper1} D. Cooper, D. D. Long and M. Thistlethwaite, {\em Computing varieties of representations of hyperbolic 3-manifolds into $SL(4, \mathbb{R})$}, Experimental Math. {\bf 15} (2006), 291--305.

\bibitem{Long2} D. Cooper, D. D. Long, and M. Thistlethwaite, {\em Constructing noncongruence subgroups of flexible hyperbolic 3-manifold groups}, Proceedings of the American Mathematical Society. {\bf 137} (2009), 3943--3949.

\bibitem{Cooper3} D. Cooper, D. D. Long and M. Thistlethwaite, {\em Flexing closed hyperbolic manifolds}, Geometry and Topology. {\bf 11} (2007), 2413--2440.

\bibitem{Hillen} P. Hillen, {\em Vol3 in Lattices Mathematica File}, \href{https://github.com/pkhillen/Vol3-in-Lattices-Mathematica-Files}{https://github.com/pkhillen/Vol3-in-Lattices-Mathematica-Files} (2024).

\bibitem{Landherr1} W. Landherr, {\em Äquivalenz Hermitescher Formen über einen beliebigen algebraischen Zahlkörper}, Abh. Math. Sem. Univ. Hamburg. {\bf 11} (1935), 245--248.

\bibitem{Lapan1} S. Lapan, B. Linowitz, and  J. S. Meyer, 
{\em Systole inequalities up congruence towers for arithmetic locally symmetric spaces}, To appear in Comm. Anal. Geom (2017), arXiv:1710.00071.

\bibitem{Lewis1} D. W. Lewis, {\em The Isometry Classification of Hermitian Forms over Division Algebras},
Linear Algebra and Its Applications. (1982) 245--272.

\bibitem{Long1} D. D. Long, and A. W. Reid, {\em Sequences of high rank lattices of large systole containing a
fixed genus surface group},  New York J. Math. {\bf 25} (2019), 145--155.

\bibitem{Kassel} F. Kassel, {\em Geometric Structures and Representations of Discrete groups}, Proc. Int. Cong. of Math. {\bf 2} (2018), 1133--1170.



\bibitem{Witte} D. Witte Morris, {\em Introduction to Arithmetic Groups}, Deductive Press (2015).


\end{thebibliography}
\end{document}